\title[Translation invariant symmetric polynomials]{On translation
  invariant symmetric polynomials and Haldane's conjecture}
\author{Jesse Liptrap}
\address{Mathematics Department\\
  University of California\\
  Santa Barbara, CA 93106\\
  U.S.A.}
\email{jliptrap@math.ucsb.edu}
\newtheorem*{mainthm*}{Main Theorem}
\newtheorem{thm}{Theorem}[section]
\newtheorem*{thm*}{Theorem}
\newtheorem{cor}[thm]{Corollary}
\newtheorem*{cor*}{Corollary}
\newtheorem{lem}[thm]{Lemma}
\newtheorem*{lem*}{Lemma}
\newtheorem{obs}[thm]{Observation}
\newtheorem*{obs*}{observation}
\newtheorem{prop}[thm]{Proposition}
\newtheorem*{prop*}{Proposition}
\newtheorem{conj}[thm]{Conjecture}
\theoremstyle{remark}
\newtheorem*{rem*}{Remark}
\theoremstyle{definition}
\newtheorem{defn}[thm]{Definition}
\newtheorem*{defn*}{Definition}
\newcommand{\comment}[1]{}
\newcommand{\F}{\ensuremath{\mathbb{F}}}
\newcommand{\C}{\ensuremath{\mathbb{C}}}
\newcommand{\zz}{\ensuremath{z_1, \ldots, z_n}}
\newcommand{\avg}{\ensuremath{\mathrm{avg}}}
\newcommand{\trans}[1]{\ensuremath{T}}
\newcommand{\sym}[1]{\ensuremath{S}}
\newcommand{\symgrp}[1]{\ensuremath{\mathrm{Sym}(#1)}}
\newcommand{\lex}{\mathrm{lex}}
\begin{document}

\begin{abstract}
  We show that the ring of translation invariant symmetric polynomials in $n$
  variables is isomorphic to the full polynomial ring in $n-1$ variables, in
  characteristic $0$.  We disprove a conjecture of Haldane regarding the
  structure of such polynomials.  Our motivation is the fractional quantum Hall
  effect, where translation invariant (anti)symmetric complex $n$-variate
  polynomials characterize $n$-electron wavefunctions.
\end{abstract}
\maketitle

\section{Introduction}
A polynomial $p(\zz)$ is \emph{translation invariant} if
\[p(z_1+c, \ldots, z_n+c)=p(z_1, \ldots, z_n)\] for all $c$.  Over $\C$, such a
polynomial might yield a quantum mechanical description of $n$ particles in the
plane.  Such is the nature of fractional quantum Hall states, where in addition
our polynomials are symmetric or antisymmetric (Wen and Wang \cite{wenwang}).
Thus we are led to the study of translation invariant (anti)symmetric
polynomials.  Antisymmetric polynomials fortunately do not need special
treatment, as they are merely symmetric polynomials multiplied by the
Vandermonde determinant $\prod_{i<j}(z_i-z_j)$.

This elementary note contains two main results on the structure of translation
invariant symmetric polynomials.  First, a simple description of the ring of
all such polynomials (Corollary~\ref{isom}).  Second, a counterexample to
Haldane's conjecture~\cite{haldane} that every homogeneous translation
invariant symmetric polynomial satisfies a certain physically convenient
property (Proposition~\ref{counterexample}).  More precisely, each symmetric
polynomial $p$ is associated with a finite poset $B(p)$; Haldane conjectured
that if $p$ is homogeneous and translation invariant, then $B(p)$ has a
maximum.  We prove the conjecture for polynomials of at most three variables,
construct a minimal counterexample, and discuss whether a weakened version of
the conjecture holds.

\section{The ring of translation invariant symmetric polynomials}

Throughout, let $\F$ be a field of characteristic $0$.  We begin with just
translation invariance.  Let $\trans{Z} \subseteq \F[\zz]$ be the ring of
translation invariant polynomials.  Imagining $z_1, \ldots, z_n$ as the
locations of $n$ identical particles, and $x_1, \ldots, x_n$ as the
corresponding center of mass coordinates, our main theorem says that $T$
written in terms of $x_1, \ldots, x_n$ is $\F[x_1, \ldots, x_n]$ modulo one
degree of freedom.

\begin{thm}\label{ker_avg}
  Let $\rho \colon \F[x_1, \ldots, x_n] \to \trans{Z}$ be the surjective
  algebra homomorphism
  \[x_i \mapsto z_i-z_\avg\] where $z_\avg = \frac{1}{n}(z_1+\cdots+z_n)$.
  Then $\ker \rho = (x_\avg)$.
\end{thm}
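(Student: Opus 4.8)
The plan is to verify in turn that $\rho$ is well defined, that it is surjective, and that $\ker\rho=(x_\avg)$, where $x_\avg=\tfrac1n(x_1+\cdots+x_n)$ is the evident analogue of $z_\avg$; the kernel computation is the substance. Writing $y_i=z_i-z_\avg$, I would first note that a simultaneous shift $z_j\mapsto z_j+c$ sends $z_\avg\mapsto z_\avg+c$ and hence fixes each $y_i$, so $y_i\in T$ and $\rho$ is a genuine algebra map into $T$. For surjectivity I would use that the translation invariance of $p\in T$ is the statement $p(z_1+c,\ldots,z_n+c)=p(\zz)$ for all $c\in\F$; since $\mathrm{char}\,\F=0$ makes $\F$ infinite, this is a polynomial identity in $c$ with coefficients in $\F[\zz]$, so I may substitute $c=-z_\avg$ to obtain $p(y_1,\ldots,y_n)=p(\zz)$. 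This exhibits $p$ as the image under $\rho$ of the polynomial $p(x_1,\ldots,x_n)$, proving surjectivity.

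The inclusion $(x_\avg)\subseteq\ker\rho$ is immediate, because $\rho(x_\avg)=\tfrac1n\sum_i(z_i-z_\avg)=0$ by the definition of $z_\avg$. The heart of the matter is the reverse inclusion, and the key tool will be a pair of matching invertible linear coordinate changes. On the target side, I would observe that the $n$ quantities $z_\avg,y_1,\ldots,y_{n-1}$ arise from $z_1,\ldots,z_n$ by an invertible linear map: one recovers $z_i=y_i+z_\avg$ for $i<n$ and $z_n=z_\avg-\sum_{i<n}y_i$. Hence these $n$ quantities are algebraically independent over $\F$; in particular $y_1,\ldots,y_{n-1}$ are independent, and since $\sum_i y_i=0$ forces $y_n=-\sum_{i<n}y_i$, we get $T=\F[y_1,\ldots,y_{n-1}]$, a polynomial ring in $n-1$ algebraically independent generators.

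On the source side I would perform the companion change of coordinates $u=x_1+\cdots+x_n$, so that $\F[x_1,\ldots,x_n]=\F[x_1,\ldots,x_{n-1},u]$. Under $\rho$ one has $u\mapsto\sum_i y_i=0$ and $x_i\mapsto y_i$ for $i<n$. Thus $\rho$ factors as the quotient $\F[x_1,\ldots,x_{n-1},u]\twoheadrightarrow\F[x_1,\ldots,x_{n-1},u]/(u)\cong\F[x_1,\ldots,x_{n-1}]$ followed by the map $x_i\mapsto y_i$, and this second map is an isomorphism onto $T$ precisely because $y_1,\ldots,y_{n-1}$ are algebraically independent. An isomorphism is injective, so $\ker\rho=(u)$, and since $n$ is invertible in $\F$ we have $(u)=(\sum_i x_i)=(x_\avg)$, as required.

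I expect the only real obstacle to be the algebraic independence of $y_1,\ldots,y_{n-1}$: once that is secured, the factorization and the kernel computation are formal. It is worth checking that this independence, the definition of $z_\avg$ itself, and the final identity $(u)=(x_\avg)$ are exactly the points where the characteristic-zero hypothesis enters, through the invertibility of $n$; without it the coordinate change above need not be available and the clean description of $\ker\rho$ can break down.
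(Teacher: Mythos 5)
Your proof is correct, but it takes a genuinely different and more elementary route than the paper's. The paper factors $\rho$ through an auxiliary ring $Y=\F[y_1,\ldots,y_n]$ in difference coordinates $y_i\mapsto z_i-z_{i+1}$ (indices mod $n$): it proves the first factor $\tau$ is an isomorphism by showing a certain circulant matrix is nonsingular (citing a theorem of Geller, Kra, Popescu, and Simanca), and computes the kernel of the second factor $\pi$ by exhibiting a chain isomorphism onto a manifestly exact sequence. You instead make two matched invertible \emph{linear} changes of coordinates --- $(z_1,\ldots,z_n)\leftrightarrow(y_1,\ldots,y_{n-1},z_\avg)$ on the target and $(x_1,\ldots,x_n)\leftrightarrow(x_1,\ldots,x_{n-1},u)$ with $u=x_1+\cdots+x_n$ on the source --- after which $\rho$ visibly becomes the quotient by $(u)$ followed by an isomorphism, since $y_1,\ldots,y_{n-1}$ are algebraically independent and generate $T$ (the latter by the surjectivity you establish first). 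This avoids both the circulant-matrix input and the homological packaging, and it delivers the structural fact $T\cong\F[y_1,\ldots,y_{n-1}]$ directly, which is essentially what the paper extracts afterwards in Corollaries~\ref{surj} and~\ref{isom}. The one point requiring a little care, which you handle, is that substituting $c=-z_\avg$ into the identity $p(z_1+c,\ldots,z_n+c)=p(\zz)$ is legitimate because the identity holds for all $c$ in the infinite field $\F$ and hence as a polynomial identity in an indeterminate $c$; the paper makes the same move in proving Corollary~\ref{surj}. Your closing remark about where characteristic zero (really, invertibility of $n$) enters is accurate and matches the role it plays in the paper's argument.
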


\begin{proof}
  Section~\ref{bigproof}.
\end{proof}

Now let $R \subseteq T$ be the ring of translation invariant symmetric
polynomials in $z_1, \ldots, z_n$, and $S \subseteq \F[x_1, \ldots, x_n]$ be
the ring of symmetric polynomials in $x_1, \ldots, x_n$.

\begin{cor}\label{surj}
  Let $\sigma \colon S \to R$ agree with $\rho$.  Then $\sigma$ is a surjective
  algebra homomorphism, with kernel $(x_1 + \cdots + x_n)$.
\end{cor}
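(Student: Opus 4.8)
The plan is to verify in turn that $\sigma$ is well defined, is an algebra homomorphism, is surjective, and has the stated kernel; the heart of the matter is the kernel computation. The one structural fact I would use throughout is that the substitution $x_i \mapsto z_i - z_\avg$ defining $\rho$ is \emph{$\symgrp{n}$-equivariant}: since $z_\avg$ is symmetric and hence fixed by permutations of the $z_i$, permuting the $z_i$ by $\pi \in \symgrp{n}$ merely permutes the images $z_i - z_\avg$ by the same $\pi$, so $\rho(\pi \cdot p) = \pi \cdot \rho(p)$ for every $p \in \F[x_1, \ldots, x_n]$.

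Well-definedness and the homomorphism property then follow quickly. If $p \in S$ is symmetric, equivariance gives $\pi \cdot \rho(p) = \rho(\pi \cdot p) = \rho(p)$ for all $\pi$, so $\rho(p)$ is symmetric in the $z_i$; being in the image of $\rho$ it is also translation invariant, hence lies in $R$. Thus $\sigma := \rho|_S$ is a well-defined map $S \to R$, and it inherits the algebra-homomorphism property from $\rho$.

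For surjectivity I would invoke the averaging (Reynolds) operator, which is where characteristic $0$ enters. Given $q \in R$, surjectivity of $\rho$ (Theorem~\ref{ker_avg}) supplies some $p \in \F[x_1, \ldots, x_n]$ with $\rho(p) = q$, but $p$ need not be symmetric. Replace it by its symmetrization $\bar p = \frac{1}{n!} \sum_{\pi \in \symgrp{n}} \pi \cdot p \in S$. Since $q$ is symmetric, equivariance gives $\rho(\pi \cdot p) = \pi \cdot q = q$ for every $\pi$, whence $\rho(\bar p) = q$, so $\sigma$ is surjective.

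The main obstacle is the kernel, which requires an ideal-contraction argument rather than the formal nonsense above. By Theorem~\ref{ker_avg}, $\ker \sigma = \ker \rho \cap S = (x_\avg) \cap S$, and since $n$ is invertible this equals $(e_1) \cap S$, where $e_1 = x_1 + \cdots + x_n$ and $(e_1)$ denotes the ideal of the full ring $\F[x_1, \ldots, x_n]$. I must show this contraction coincides with the ideal of $S$ generated by $e_1$ (note $e_1 \in S$). One inclusion is immediate. For the other, suppose $f \in S$ factors as $f = e_1 h$ with $h \in \F[x_1, \ldots, x_n]$ possibly nonsymmetric. Applying any $\pi \in \symgrp{n}$ and using that both $f$ and $e_1$ are symmetric yields $e_1 h = e_1 (\pi \cdot h)$; since $\F[x_1, \ldots, x_n]$ is an integral domain and $e_1 \neq 0$, cancellation forces $h = \pi \cdot h$ for all $\pi$, so $h \in S$. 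Hence $f$ lies in the ideal of $S$ generated by $e_1 = x_1 + \cdots + x_n$, which completes the kernel computation and the proof.
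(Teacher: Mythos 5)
Your proof is correct, but it takes a genuinely different route from the paper's, and in one place it is more careful. For surjectivity the paper uses a one-line trick: since $p(z_1+c,\ldots,z_n+c)=p(z_1,\ldots,z_n)$ holds for all scalars $c$ over an infinite field, it holds as a polynomial identity in an indeterminate $c$ and may be specialized at $c=-z_\avg$, giving $p(z_1,\ldots,z_n)=p(z_1-z_\avg,\ldots,z_n-z_\avg)=\rho(p(x_1,\ldots,x_n))$; because $p$ is symmetric, the preimage $p(x_1,\ldots,x_n)$ already lies in $S$, so no averaging is needed. Your Reynolds-operator argument is equally valid --- and the $\symgrp{n}$-equivariance of $\rho$ that you isolate is exactly the right structural reason why $\sigma$ is well defined --- but it buys surjectivity at the cost of invoking invertibility of $n!$, where the paper needs only the defining property of $T$. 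On the kernel you do more than the paper: its proof consists of the single remark that it suffices to show $\rho(S)=R$, leaving the contraction $(x_\avg)\cap S=(x_1+\cdots+x_n)$ (as an ideal of $S$) unjustified, whereas your cancellation argument --- if $f=e_1h$ is symmetric then $e_1h=e_1(\pi\cdot h)$ for every $\pi$, and since $e_1$ is a nonzero element of a domain, $h$ is symmetric --- supplies exactly the missing step showing the contracted ideal is generated by $e_1$ inside $S$. That is a small but genuine gap in the paper's write-up that your version closes.
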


\begin{proof}
  It suffices to show $\rho(\sym{X})=R$.  Clearly $\rho(\sym{X}) \subseteq R$.
  Given $p(z_1, \ldots, z_n) \in R$, translation invariance yields
  \[ p(z_1, \ldots, z_n) = p(z_1-z_\avg, \ldots, z_n-z_\avg) = \rho(p(x_1,
  \ldots, x_n)). \] Thus $\rho(\sym{X})=R$.
\end{proof}

Since $\F$ has characteristic $0$, any element of $S$ can be written uniquely
as a polynomial in the power sum symmetric polynomials $x_1^k + \cdots +
x_n^k$, where $1 \leq k \leq n$.  In other words, the algebra homomorphism
$\theta \colon \F[w_1, \ldots, w_n] \to S$ defined by $\theta(w_k) = x_1^k
+\cdots+ x_n^k$ is an isomorphism.  Note that we could define a different
isomorphism $\theta$ using elementary symmetric polynomials or complete
homogeneous symmetric polynomials.  In any case, $\sigma \theta \colon \F[w_1,
\ldots, w_n] \to R$ is a surjective algebra homomorphism, with kernel $(w_1)$.

\begin{cor}\label{isom}
  The algebra homomorphism $\F[w_2, \ldots, w_n] \to R$ given by
  \[w_k \mapsto (z_1-z_\avg)^k + \cdots + (z_n-z_\avg)^k \] is an isomorphism.
\end{cor}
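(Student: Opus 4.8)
The plan is to recognize the stated map as nothing more than the restriction of the composite $\sigma\theta \colon \F[w_1, \ldots, w_n] \to R$ to the subalgebra $\F[w_2, \ldots, w_n]$, and then to read off the corollary from the description of $\sigma\theta$ already in hand. First I would check that the two maps agree on generators: since $\rho$ is an algebra homomorphism with $\rho(x_i) = z_i - z_\avg$, we have
\[
  \sigma\theta(w_k) = \rho(x_1^k + \cdots + x_n^k) = (z_1 - z_\avg)^k + \cdots + (z_n - z_\avg)^k,
\]
which is exactly the image prescribed in the statement. Hence the map in the corollary is $\sigma\theta$ restricted to the subalgebra generated by $w_2, \ldots, w_n$.

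The key structural fact, recorded just before the statement, is that $\sigma\theta$ is surjective with kernel exactly $(w_1)$. I would combine this with the $\F$-vector space decomposition
\[
  \F[w_1, \ldots, w_n] = \F[w_2, \ldots, w_n] \oplus (w_1),
\]
which holds because every polynomial splits uniquely into its part free of $w_1$, a polynomial in $w_2, \ldots, w_n$, and a remainder divisible by $w_1$. Thus $\F[w_2, \ldots, w_n]$ is a vector-space complement of the kernel. Injectivity of the restriction then follows because the only element of $\F[w_2, \ldots, w_n]$ lying in $(w_1)$ is $0$; and surjectivity follows because, given $r \in R$, any $\sigma\theta$-preimage $f$ can be reduced modulo $w_1$ (i.e.\ we substitute $w_1 = 0$), changing $f$ only by an element of the kernel and yielding a polynomial in $w_2, \ldots, w_n$ with the same image $r$. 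A bijective algebra homomorphism is an isomorphism, so this completes the argument.

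I expect no serious obstacle here: the corollary is essentially the first isomorphism theorem applied to $\sigma\theta$, via the canonical identification $\F[w_1, \ldots, w_n]/(w_1) \cong \F[w_2, \ldots, w_n]$. The only point requiring a moment of care is the generator check in the first paragraph, namely that $\rho$ commutes with taking $k$-th powers; once that is confirmed, everything reduces to the surjectivity and kernel computation already established.
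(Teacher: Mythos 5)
Your proposal is correct and matches the paper's (implicit) argument: the paper gives no separate proof, since the corollary follows immediately from the preceding observation that $\sigma\theta$ is surjective with kernel $(w_1)$, exactly the first-isomorphism-theorem reading you spell out. The generator check $\sigma\theta(w_k) = (z_1-z_\avg)^k + \cdots + (z_n-z_\avg)^k$ and the complement $\F[w_1,\ldots,w_n] = \F[w_2,\ldots,w_n] \oplus (w_1)$ are precisely the details the paper leaves to the reader.
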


Next we consider the vector space $R^d$ of all polynomials in $R$ which are
homogeneous of degree $d$. Let $f$ be the above isomorphism.  Since $f(w_k)$ is
homogeneous of degree $k$, we obtain a basis for $R^d$, namely all
\begin{equation}\label{CRRbasis}
  w_{\lambda} = \prod_{k=2}^n f(w_k)^{\lambda_k}
\end{equation}
where $\lambda$ is any partition of $d$ into integers between $2$ and $n$, and
$\lambda_k$ is the multiplicity of $k$ in $\lambda$.  Simon, Rezayi, and Cooper
\cite{simon} prove directly that these $w_{\lambda}$ form a basis of $R^d$,
whereas we have deduced this fact from the ring structure of $R$. Although
\cite{simon} defines $w_\lambda$ using elementary symmetric polynomials rather
than power sum symmetric polynomials, this difference is purely cosmetic.
Since the dimension $m_d$ of $R^d$ is the number of partitions of $d$ into
integers between $2$ and $n$, a generating function for $m_d$ is
\[ \sum_{d=0}^\infty m_dt^d = \prod_{s=2}^n \frac{1}{1-t^s} \]

Finally, we describe the vector space $A \subset \F[z_1, \ldots, z_n]$ of
translation invariant antisymmetric polynomials.  It is well-known that any
antisymmetric polynomial can be written uniquely as $q \Delta$, where $q$ is a
symmetric polynomial and $\Delta$ is the Vandermonde determinant
$\prod_{i<j}(z_i-z_j)$.  Since $\Delta$ is translation invariant, we have $A =
R \Delta$, defining a vector space isomorphism $R \to A$, which sends each
basis \eqref{CRRbasis} to a basis for the vector space of homogeneous
translation invariant antisymmetric polynomials of degree $d + n(n-1)/2$.

\section{Haldane's conjecture}
Every symmetric polynomial is a unique linear combination of symmetrized
monomials, which physicists like to call \emph{boson occupation states}.  We
identify symmetrized monomials with multisets of natural numbers:
\[ [l_1, \ldots, l_n] = \sum_{\sigma \in \symgrp{n}} z_{\sigma(1)}^{l_1} \cdots
z_{\sigma(n)}^{l_n}\] For instance, the multiset $[5,0,0]$ corresponds to the
symmetrized monomial $2z_1^5+2z_2^5+2z_3^5$.  \emph{Squeezing} a symmetrized
monomial $[l_1, \ldots, l_n]$ means decrementing $l_i$ and incrementing $l_j$
for any pair of indices $i,j$ such that $l_i>l_j+1$.  The \emph{squeezing
  order} is a partial order on symmetrized monomials: put $s > t$ iff $t$ can
be obtained from $s$ by repeated squeezing.  For a symmetric polynomial $p$,
let $B(p)$ be the set of all symmetrized monomials with nonzero coefficient in
$p$.  We view $B(p)$ as a poset under the squeezing order and refer to it as
the $\emph{squeezing poset}$ of $p$.
\begin{defn}
  A symmetric polynomial is \emph{Haldane} if its squeezing poset has a
  maximum.
\end{defn}
\begin{conj}[Haldane~\cite{haldane}]
  Every homogeneous translation invariant symmetric polynomial is Haldane.
\end{conj}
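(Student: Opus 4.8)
The plan is to identify the squeezing order concretely and then pin down the maximum. Since $p$ is homogeneous of degree $d$, every symmetrized monomial $[l_1,\ldots,l_n]$ occurring in $p$ satisfies $l_1+\cdots+l_n=d$, so $B(p)$ is a set of partitions of $d$ into at most $n$ parts and the squeezing relation is precisely the dominance (majorization) order on them. The one-part partition $[d,0,\ldots,0]$ dominates every partition of $d$, so it is the maximum of the whole ambient poset. This gives the decisive first reduction: \emph{if the coefficient of $[d,0,\ldots,0]$ in $p$ is nonzero, then $[d,0,\ldots,0]$ is automatically the maximum of $B(p)$ and $p$ is Haldane.} It remains only to treat the case where this coefficient vanishes.

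To compute that coefficient I would use the ring structure from Corollary~\ref{isom}. Set $g_k=(z_1-z_\avg)^k+\cdots+(z_n-z_\avg)^k$, so that $p=\sum_\lambda c_\lambda w_\lambda$ with $w_\lambda=\prod_{k=2}^n g_k^{\lambda_k}$ as in \eqref{CRRbasis}. A direct expansion shows that, for $n\ge 3$ and $2\le k\le n$, the pure power $z_1^k$ occurs in $g_k$ with the nonzero coefficient $c_k=(n-1)\,[(n-1)^{k-1}+(-1)^k]/n^k$. Hence $z_1^d$ is the lexicographically leading monomial of every $w_\lambda$, with leading coefficient $\prod_k c_k^{\lambda_k}\ne 0$, and the coefficient of $[d,0,\ldots,0]$ in $p$ equals, up to a fixed nonzero scalar, the linear functional $\Phi(p)=\sum_\lambda c_\lambda\prod_{k=2}^n c_k^{\lambda_k}$. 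By the first paragraph, $p$ is Haldane whenever $\Phi(p)\ne 0$, and the entire problem is concentrated in the subspace $\Phi(p)=0$.

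It remains to prove the statement when $\Phi(p)=0$, where I would try to induct. Once the globally maximal monomial $[d,0,\ldots,0]$ has dropped out, the maximum of $B(p)$ must be sought among the leading monomials of the surviving $w_\lambda$, found on the next dominance layers $[d-1,1,0,\ldots]$, $[d-2,2,\ldots]$, $[d-2,1,1,\ldots]$, and below; the natural induction is on $d$, peeling off the functional $\Phi$, or on $n$. The hard part will be exactly here. The reason the first case was trivial is that $[d,0,\ldots,0]$ is a global maximum; once it is gone, the leading monomials of two distinct surviving basis elements $w_\lambda$ and $w_\mu$ need not be dominance-comparable, and translation invariance does not visibly force a common upper bound inside $B(p)$. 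The crux is therefore to show that the surviving leading monomials always possess a dominance-maximum lying in $B(p)$, that is, to rule out two incomparable dominance-maximal survivors. For $n\le 3$ the constraints supplied by Corollary~\ref{isom} should be strong enough to exclude such incomparable tops, closing the induction; carrying the same control to arbitrary $n$, where incomparable partitions are abundant in the ambient poset, is the central difficulty.
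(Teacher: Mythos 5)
The fatal problem is not a gap in your induction but the target itself: the statement you are trying to prove is false, and the paper states this \texttt{Conjecture} environment precisely in order to refute it. Proposition~\ref{nonHaldane} shows that Haldane's conjecture fails already for $R_4^{14}$, and Proposition~\ref{counterexample} exhibits an explicit homogeneous translation invariant symmetric polynomial $p \in R_4^{14}$ whose squeezing poset has two incomparable maximal elements, $[8,4,2,0]$ and $[7,7,0,0]$, hence no maximum. Consequently the ``central difficulty'' you flag at the end---ruling out two incomparable dominance-maximal survivors once $[d,0,\ldots,0]$ has dropped out---cannot be overcome for general $n$: by Lemma~\ref{nosqueeze}, the mere existence of the incomparable pair in $L_4^{14}$ forces a non-Haldane polynomial to exist.

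That said, your partial analysis is locally sound and even anticipates where the failure lives. The identification of squeezing with the dominance order on partitions of $d$ is correct, as is your reduction: $[d,0,\ldots,0]$ dominates everything, and your leading coefficient $c_k=(n-1)\bigl[(n-1)^{k-1}+(-1)^k\bigr]/n^k$ of $z_1^k$ in $g_k$ is right and nonzero for $n\ge 3$, so $p$ is Haldane whenever your functional $\Phi(p)\neq 0$. The counterexample sits exactly in the hyperplane $\Phi(p)=0$ (note $[14,0,0,0]\notin B(p)$), confirming your localization but showing the remaining case is genuinely false rather than merely hard. Your expectation that the case $n\le 3$ is tractable matches the paper, though it proves that case by a different device: writing $p(z_1+t,z_2+t,z_3+t)=\sum_i \tau_i(p)t^i$ and using $\tau_1([a,b,c])=a[a-1,b,c]+b[a,b-1,c]+c[a,b,c-1]$ to show that a squeezing-maximal $[a,b,c]$ with $c>0$ would force $\tau_1(p)\neq 0$, contradicting translation invariance; the surviving maximal elements $[a,b,0]$ are then linearly ordered. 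The most your approach can salvage in general is the easy half---nonvanishing coefficient of $[d,0,\ldots,0]$ implies Haldane---which is true but far short of the conjecture, and any attempt to close your induction for $n\ge 4$ must founder on the explicit degree-$14$ counterexample.
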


\begin{rem*}
  Since squeezing preserves homogeneous degree, Haldane polynomials are
  homogeneous.  Many homogeneous symmetric polynomials are not Haldane, such as
  $[3,3,0]+[4,1,1]$, but these might not be translation invariant.
\end{rem*}

\begin{prop}
  Haldane's conjecture holds for polynomials of $\leq 3$ variables.
\end{prop}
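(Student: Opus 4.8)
The plan is to reinterpret the squeezing order as the dominance order on partitions and then exhibit an explicit maximum of $B(p)$. Recall that squeezing moves one unit from a larger part to a smaller part, so, sorting each multiset into a partition, it is classical that the squeezing order is exactly the dominance order: $\lambda \geq \mu$ iff $\sum_{i \leq k}\lambda_i \geq \sum_{i\leq k}\mu_i$ for every $k$. For partitions of a fixed $d$ into at most three parts this order is especially simple: since $\lambda_1 + \lambda_2 + \lambda_3 = d$, one has $\lambda \geq \mu$ iff $\lambda_1 \geq \mu_1$ and $\lambda_3 \leq \mu_3$. Thus $\lambda \mapsto (\lambda_1, -\lambda_3)$ embeds this poset into $\mathbb{Z}^2$ with the product order, and a finite set of such partitions has a maximum iff the partition realizing simultaneously the largest first part and the smallest third part occurring in the set belongs to the set.

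Assume (without loss of generality) that $p$ is a nonconstant homogeneous translation invariant symmetric polynomial of degree $d$ in $z_1, z_2, z_3$, and set $M_1 = \deg_{z_1} p$, which by symmetry is the largest exponent occurring in any monomial of $p$. I claim the maximum of $B(p)$ is the partition $(M_1, d - M_1, 0)$. First, this dominates every $\mu \in B(p)$: indeed $\mu_1 \leq M_1$ and $\mu_1 + \mu_2 \leq d = M_1 + (d - M_1)$, which are precisely the two partial-sum inequalities. So it remains only to show that $(M_1, d - M_1, 0) \in B(p)$.

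This is the crux, and it is where translation invariance enters. Translation invariance is equivalent to $Dp = 0$ for $D = \partial_1 + \partial_2 + \partial_3$. Expand $p = \sum_k z_1^k q_k(z_2, z_3)$, with $q_{M_1} \neq 0$ homogeneous of degree $d - M_1$. The coefficient of $z_1^{M_1}$ in $Dp$ is $(\partial_2 + \partial_3) q_{M_1}$, since $\partial_1 p$ contributes only in $z_1$-degree at most $M_1 - 1$; hence $Dp = 0$ forces $q_{M_1}$ to be translation invariant in the two variables $z_2, z_3$. By the two-variable case of Theorem~\ref{ker_avg}, the translation invariant polynomials in $z_2, z_3$ are precisely $\F[z_2 - z_3]$, so, being homogeneous of degree $d - M_1$, the coefficient $q_{M_1}$ must equal $c\,(z_2 - z_3)^{d - M_1}$ for some $c \neq 0$. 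In particular $z_1^{M_1} z_2^{d - M_1}$ is a monomial of $p$ (it lies in the top $z_1$-degree part, so no lower term can cancel it); consequently $d - M_1 \leq M_1$, so $(M_1, d - M_1, 0)$ is a genuine partition, and since $p$ is symmetric the whole $\symgrp{3}$-orbit of this monomial occurs with the common nonzero coefficient $c$. Therefore $(M_1, d - M_1, 0) \in B(p)$, which completes the argument.

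I expect the reduction to the top $z_1$-degree coefficient to be the decisive step, and it is exactly the hypothesis of at most three variables that makes it work: $q_{M_1}$ then depends on at most two variables, so translation invariance pins it down to a single power of $z_2 - z_3$. The cases of one and two variables are the same argument with $q_{M_1}$ in at most one variable (forcing $M_1 = d$). For four or more variables, by contrast, $q_{M_1}$ is only constrained to be translation invariant in three or more variables and so retains genuine freedom; this is precisely the slack that the counterexample of Proposition~\ref{counterexample} exploits.
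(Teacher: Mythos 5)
Your proof is correct, and it reaches the conclusion by a genuinely different route from the paper's. The paper also exploits the first-order consequence of translation invariance, but entirely in the symmetrized-monomial basis: writing $p(z_1+t,z_2+t,z_3+t)=\sum_i\tau_i(p)t^i$, it computes $\tau_1([a,b,c])=a[a-1,b,c]+b[a,b-1,c]+c[a,b,c-1]$ and argues by contradiction that a squeezing-maximal element $[a,b,c]$ with $a\geq b\geq c>0$ would make the coefficient of $[a,b,c-1]$ in $\tau_1(p)$ equal to $c$ times a nonzero coefficient (the other potential contributors $[a+1,b,c-1]$ and $[a,b+1,c-1]$ being excluded by maximality), so every maximal element has the form $[a,b,0]$ and these are linearly ordered. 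Your operator $D=\partial_1+\partial_2+\partial_3$ is exactly the paper's $\tau_1$, but you deploy it in the plain monomial basis, slicing off the top $z_1$-degree coefficient $q_{M_1}$ and invoking the $n=2$ case of the ring structure (Theorem~\ref{ker_avg}) to force $q_{M_1}=c(z_2-z_3)^{d-M_1}$. This buys more than the paper's argument: an explicit identification of the maximum as $[M_1,d-M_1,0]$, the exact shape of the top slice of $p$, and a clean explanation of why the method stalls for $n\geq 4$. The one ingredient you use that the paper does not is the identification of the squeezing order with the dominance order on partitions; the direction you actually need (that $[M_1,d-M_1,0]$ squeezes down to every partition of $d$ into at most three parts with first part at most $M_1$) is classical and easily verified directly for three parts, so there is no gap, though a citation or a two-line check would make the write-up self-contained.
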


\begin{proof}
  The conjecture is vacuously true for univariate polynomials.  Every bivariate
  symmetrized monomial of homogeneous degree $d$ has the form $[a,b]$, with $a
  + b = d$.  These are linearly ordered under squeezing, so Haldane's
  conjecture is automatic in the bivariate case.

  For the trivariate case, define $\tau \colon \F[z_1, z_2, z_3] \to \F[z_1,
  z_2, z_3,t]$ by
  \[ \tau(p)(z_1, z_2, z_3, t) = p(z_1+t, z_2+t, z_3+t), \] so that $p$ is
  translation invariant iff $\tau(p) = p$.  Define linear endomorphisms
  $\tau_i$ of $\F[z_1, z_2, z_3]$ by $\tau(p) = \sum_{i = 0}^d \tau_i(p) t^i$,
  so that $p$ is translation invariant iff $\tau_i(p)=0$ for all $i>0$.  Then
  \[
  \tau_1([a,b,c]) = a[a-1,b,c] + b[a,b-1,c] + c[a,b,c-1]
  \]
  for all $a,b,c > 0$.  Now suppose $[a,b,c]$ is a maximal element of the
  squeezing poset of some $p \in R_3^d$, with $a \geq b \geq c >0$.  Then
  $[a+1,b,c-1]$ and $[a,b+1,c-1]$ are not in $B(p)$.  The above equation then
  implies that the coefficient of $[a,b,c]$ in $p$ equals $c$ times the
  coefficient of $[a,b,c-1]$ in $\tau_1(p)$.  Thus $\tau_1(p) \neq 0$,
  contradicting the translation invariance of $p$.  Therefore every maximal
  element of $B(p)$ has the form $[a,b,0]$, with $a+b=d$.  These are linearly
  ordered under squeezing; their maximum is the maximum of $B(p)$.
\end{proof}

Any two symmetrized monomials written as weakly decreasing sequences of natural
numbers can be compared lexicographically.  The lexicographic order $>_\lex$ on
symmetrized monomials linearizes the squeezing order.  Let $R_n^d$ be the
vector space of translation invariant symmetric $n$-variate polynomials of
homogeneous degree $d$, and let $L_n^d$ be the set of lexicographic maxima of
squeezing posets of polynomials in $R_n^d$.  Note $|L_n^d| \leq \dim R_n^d$.

\begin{defn}
  A symmetrized monomial $s$ is \emph{completely squeezable} if $s >_\lex t$
  implies $s > t$, for all symmetrized monomials $t$.
\end{defn}

\begin{lem}\label{nosqueeze}
  If every element of $L_n^d$ is completely squeezable, then Haldane's
  conjecture holds for $R_n^d$.  If Haldane's conjecture holds for $R_n^d$,
  then $L_n^d$ is linearly ordered under squeezing.
\end{lem}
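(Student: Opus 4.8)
The plan is to treat the two implications separately, using throughout that $>_\lex$ is a linear extension of the squeezing order, together with one elementary observation I would record first: whenever the squeezing poset $B(p)$ has a maximum $m$, that $m$ is automatically the lexicographic maximum of $B(p)$, since every other $t \in B(p)$ satisfies $m > t$ in the squeezing order and hence $m >_\lex t$. In particular, for a Haldane polynomial the squeezing maximum and the lexicographic maximum of $B(p)$ coincide.

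For the first implication, I would fix a nonzero $p \in R_n^d$ and let $s$ be the lexicographic maximum of the finite set $B(p)$. By definition $s \in L_n^d$, so by hypothesis $s$ is completely squeezable. Every other $t \in B(p)$ satisfies $s >_\lex t$, and complete squeezability upgrades this to $s > t$ in the squeezing order; hence $s$ dominates all of $B(p)$ and is its squeezing maximum, so $p$ is Haldane. Since $p$ is arbitrary, this gives Haldane's conjecture for $R_n^d$.

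For the second implication, I would assume Haldane's conjecture for $R_n^d$ and take arbitrary $s_1, s_2 \in L_n^d$; relabeling, I may assume $s_1 >_\lex s_2$ (the case $s_1 = s_2$ being trivial). Pick $p_1, p_2 \in R_n^d$ realizing each $s_i$ as the lex maximum of $B(p_i)$. The idea is to build a single polynomial in $R_n^d$ whose lex maximum is $s_1$ but which still contains $s_2$, and then read off comparability from the Haldane property. Consider $p = p_1 + c\, p_2$: since every monomial of $p_2$ is $\le_\lex s_2 <_\lex s_1$ and $s_1$ is the lex maximum of $p_1$, the coefficient of $s_1$ in $p$ is just its nonzero coefficient in $p_1$ and nothing exceeds $s_1$ lexicographically, so $s_1$ is the lex maximum of $B(p)$ for every $c$.

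The step I expect to be the main obstacle is controlling cancellation at $s_2$. The coefficient of $s_2$ in $p_1 + c\, p_2$ is affine in $c$ with nonzero linear part (namely the coefficient of $s_2$ in $p_2$), so it vanishes for at most one value of $c$; since $\F$ is infinite I would choose $c$ avoiding that value, forcing $s_2 \in B(p)$ while keeping $s_1$ as the lex maximum. Then $p \in R_n^d$ is Haldane, so $B(p)$ has a squeezing maximum, which by the opening observation is $s_1$; hence $s_1 \ge s_2$ in the squeezing order, and $s_1 \neq s_2$ gives $s_1 > s_2$. As $s_1, s_2$ were arbitrary, every pair in $L_n^d$ is squeezing-comparable, so the squeezing order restricts to a linear order on $L_n^d$.
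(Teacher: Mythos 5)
Your proposal is correct and follows essentially the same route as the paper: the first implication is the same immediate upgrade of $>_\lex$ to $>$ via complete squeezability, and the second uses the identical construction $p_1 + cp_2$ with $c$ chosen to avoid cancellation at $s_2$, differing only in that you argue directly for comparability where the paper argues by contradiction from incomparability.
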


\begin{proof}
  The first statement is immediate.  For the second, suppose $m_1,m_2 \in
  L_n^d$ are incomparable.  Let $p_1, p_2 \in R_n^d$ such that $m_i$ is the
  lexicographic maximum of $B(p_i)$ for $i = 1,2$.  W.l.o.g.\ assume $m_1$ is
  lexicographically bigger than $m_2$, and let $c_i$ be the coefficient of
  $m_2$ in $p_i$.  Choose a scalar $c \neq -c_1/c_2$, and let $q = p_1 + cp_2$.
  Then $q \in R_n^d$ and $m_1, m_2 \in B(q)$.  Since $m_1$ is the lexicographic
  maximum of $B(q)$, it is maximal in $B(q)$ under squeezing.  Since $m_1$ and
  $m_2$ are incomparable, $q$ is not Haldane.
\end{proof}

\begin{prop}\label{nonHaldane}
  Haldane's conjecture holds for $R_4^d$ with $d < 14$ but fails for
  $R_4^{14}$.
\end{prop}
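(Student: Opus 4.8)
The plan is to reduce both halves to a finite computation of the sets $L_4^d$ and then invoke Lemma~\ref{nosqueeze}: its first assertion (complete squeezability of $L_4^d$ forces Haldane) handles $d<14$, while the contrapositive of its second assertion (if $L_4^d$ is not linearly ordered under squeezing, then some polynomial fails Haldane) handles $d=14$. Two preliminary observations make the computation concrete. First, on symmetrized monomials of fixed degree the squeezing order coincides with the dominance order on partitions, since a single squeeze is exactly a unit transfer from a larger part to a part smaller by at least $2$, and such transfers generate dominance. Second, translation invariance is equivalent to annihilation by $D=\partial_{z_1}+\cdots+\partial_{z_4}$: because $p(z_1+t,\dots,z_4+t)=e^{tD}p$, we have $p\in R_4^d$ iff $Dp=0$, and on the symmetrized-monomial basis
\[
D[l_1,l_2,l_3,l_4]=\sum_{i=1}^{4} l_i\,[l_1,\dots,l_i-1,\dots,l_4].
\]
Thus $R_4^d$ is the kernel of an explicit matrix, and $L_4^d$ is precisely its set of lexicographic leading terms, so $|L_4^d|=\dim R_4^d$, the number of partitions of $d$ into parts from $\{2,3,4\}$ (Corollary~\ref{isom}).

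I would compute each $L_4^d$ by choosing a basis of $\ker D$ — either the products $w_\lambda$ of \eqref{CRRbasis} or directly from the linear system above — ordering the partitions of $d$ into at most four parts lexicographically, and reducing to echelon form; the pivot partitions are $L_4^d$. For $d\le 13$ the plan is to run this for each degree and verify that every pivot $\lambda$ is completely squeezable, i.e.\ that every partition lexicographically smaller than $\lambda$ is also dominated by $\lambda$; Lemma~\ref{nosqueeze} then gives Haldane's conjecture for all these $R_4^d$. For $d=14$, where $\dim R_4^{14}=8$, I expect the eight pivots to include two partitions incomparable in dominance. Exhibiting such a pair $m_1>_\lex m_2$ together with $p_1,p_2\in R_4^{14}$ realizing them as lexicographic maxima, I would form $q=p_1+cp_2$ with $c$ avoiding the single bad value from the proof of Lemma~\ref{nosqueeze}, producing an explicit homogeneous translation invariant symmetric polynomial whose squeezing poset has two incomparable maximal elements, and hence no maximum.

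The main obstacle is the positive half, and specifically the mismatch between the sufficient and necessary conditions in Lemma~\ref{nosqueeze}. Complete squeezability is only sufficient, so the positive argument succeeds only if that stronger property genuinely holds throughout $d\le 13$; confirming that the gap between ``completely squeezable'' and ``linearly ordered'' never opens below degree $14$ is the real content. There appears to be no structural shortcut of the kind used in the trivariate case — the argument there that maximal elements must have a zero part — precisely because the obstruction first materializes at degree $14$. The computation is finite and elementary but large enough that I would organize it by machine, taking care that the incomparable pair found at $d=14$ consists of true leading terms of $\ker D$ rather than two incomparable partitions that are merely dominated away by larger pivots.
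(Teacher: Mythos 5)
Your proposal is correct and follows essentially the same route as the paper: compute $L_4^d$ by linear algebra on a basis of $R_4^d$ (the paper uses \eqref{CRRbasis} and records the result in Table~\ref{table-lexmaxdata}), check that every element of $L_4^d$ is completely squeezable for $d<14$, exhibit an incomparable pair in $L_4^{14}$ (the paper's is $[8,4,2,0]$ and $[7,7,0,0]$), and invoke the two halves of Lemma~\ref{nosqueeze} exactly as you describe. Your supporting observations (squeezing equals dominance via unit transfers, translation invariance as the kernel of $\partial_{z_1}+\cdots+\partial_{z_4}$, and $L_n^d$ as the set of pivots so that $|L_n^d|=\dim R_n^d$) are all sound and only make the finite computation more explicit than the paper does.
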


\begin{proof}
  It is a straightforward computational linear algebraic exercise to compute
  $L_n^d$ using the basis for $R_n^d$ given by formula~\eqref{CRRbasis}.
  \begin{table}[htbp]
    \centering
    \begin{tabular}{r|l}
      $d$ & $L_4^d$\\
      $0$ & $\emptyset$\\
      $1$ & $\emptyset$\\
      $2$ & $\{[2,0,0,0]\}$\\
      $3$ & $\{[3,0,0,0]\}$\\
      $4$ & $\{[4,0,0,0],[2,2,0,0]\}$\\
      $5$ & $\{[5,0,0,0]\}$\\
      $6$ & $\{[6,0,0,0],[4,2,0,0],[3,3,0,0]\}$\\
      $7$ & $\{[7,0,0,0],[5,2,0,0]\}$\\
      $8$ & $\{[8,0,0,0],[6,2,0,0],[5,3,0,0],[4,4,0,0]\}$\\
      $9$ & $\{[9,0,0,0],[7,2,0,0],[6,3,0,0]\}$\\
      $10$ & $\{[10,0,0,0],[8,2,0,0], [7,3,0,0],[6,4,0,0],[5,5,0,0]\}$\\
      $11$ & $\{[11,0,0,0],[9,2,0,0],[8,3,0,0],[7,4,0,0]\}$\\
      $12$ & $\{[12,0,0,0],[10,2,0,0],[9,3,0,0],[8,4,0,0],[7,5,0,0],[6,6,0,0],[6,4,2,0]\}$\\
      $13$ & $\{[13,0,0,0],[11,2,0,0],[10,3,0,0],[9,4,0,0],[8,5,0,0]\}$\\
      $14$ & $\{[14,0,0,0],[12,2,0,0],[11,3,0,0],[10,4,0,0],[9,5,0,0],[8,6,0,0],[8,4,2,0],$\\
      & $\quad [7,7,0,0]\}$\\
    \end{tabular}
    \caption{Enumeration of lexicographic maxima.} \label{table-lexmaxdata}
  \end{table}
  Since every symmetrized monomial of the form $[a,b,0, \ldots, 0]$ is
  completely squeezable, as is $[6,4,2,0]$, we see that every element of
  $L_4^d$, $d < 14$, is completely squeezable (Table~\ref{table-lexmaxdata}).
  But $L_4^{14}$ is not linearly ordered under squeezing: $[8,4,2,0]$ and
  $[7,7,0,0]$ are incomparable.  Then apply Lemma~\ref{nosqueeze}.
\end{proof}

It is a straightforward computational linear algebraic exercise to construct a
non-Haldane polynomial in $R_4^{14}$ by following the proof of
Lemma~\ref{nosqueeze}.  We get
\begin{equation*}
  \begin{split}
    p =\ &3[8,4,2,0]-3[8,4,1,1]-3[8,3,3,0]+6[8,3,2,1]-3[8,2,2,2]\\
    & +3[7,7,0,0]-42[7,6,1,0]+46[7,5,2,0]+80[7,5,1,1]-22[7,4,3,0]\\
    & -188[7,4,2,1]+112[7,3,3,1]+8[7,3,2,2]+77[6,6,2,0]+70[6,6,1,1]\\
    & -182[6,5,3,0]-700[6,5,2,1]+112[6,4,4,0]+168[6,4,3,1]+1078[6,4,2,2]\\
    &-728[6,3,3,2]+5[5,5,4,0]+1072[5,5,3,1]+246[5,5,2,2]-722[5,4,4,1]\\
    &-2976[5,4,3,2]+1808[5,3,3,3]+1805[4,4,4,2]-1130[4,4,3,3].
  \end{split}
\end{equation*}

\begin{prop}\label{counterexample}
  The polynomial $p$ is a minimal counterexample to Haldane's conjecture.
\end{prop}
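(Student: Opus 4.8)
The plan is to show two things: that $p$ is a bona fide counterexample---a homogeneous translation invariant symmetric polynomial whose squeezing poset lacks a maximum---and that no counterexample can live in a smaller $R_n^d$, so that $p$ is minimal in its variable count and, among four-variable examples, in its degree. I would begin with membership $p \in R_4^{14}$. Symmetry is built into the symmetrized-monomial notation, and homogeneity of degree $14$ is visible because every $[l_1,l_2,l_3,l_4]$ occurring in $p$ satisfies $l_1+l_2+l_3+l_4=14$. Translation invariance is guaranteed by the construction, since $p$ was assembled, following the proof of Lemma~\ref{nosqueeze}, from the basis~\eqref{CRRbasis} of $R_4^{14}$; for an independent check it suffices to expand $p(z_1+t,z_2+t,z_3+t,z_4+t)$ and verify that every coefficient of a positive power of $t$ vanishes, a finite computation.

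Next I would identify the lexicographic maximum of $B(p)$. The largest leading entry appearing is $8$, and among the monomials with leading entry $8$, namely $[8,4,2,0],[8,4,1,1],[8,3,3,0],[8,3,2,1],[8,2,2,2]$, the lexicographically greatest is $[8,4,2,0]$, which occurs with coefficient $3\neq0$. Hence $[8,4,2,0]$ is the lexicographic maximum of $B(p)$, and since $>_\lex$ linearizes the squeezing order, it is maximal under squeezing. The monomial $[7,7,0,0]$ also occurs in $p$, with coefficient $3\neq0$, and it is incomparable to $[8,4,2,0]$: because a squeeze moves weight from a larger part to a smaller one, $s>t$ forces the partial sums of $s$ to dominate those of $t$, yet the leading partial sums $8,12$ of $[8,4,2,0]$ and $7,14$ of $[7,7,0,0]$ satisfy $8>7$ but $12<14$, so neither dominates the other. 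Therefore $B(p)$ has no maximum: any maximum would dominate the squeezing-maximal $[8,4,2,0]$ and thus equal it, but then it could not dominate $[7,7,0,0]$. This shows $p$ is not Haldane.

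For minimality, I would note that any counterexample, being a homogeneous translation invariant symmetric polynomial, lies in some $R_n^d$. Haldane's conjecture was established above for at most three variables (and all $d$) and, in Proposition~\ref{nonHaldane}, for $R_4^d$ with $d<14$. Consequently no counterexample exists with fewer than four variables, nor with four variables and degree less than $14$, so $p\in R_4^{14}$ is of minimal size.

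The only laborious step is the translation-invariance check, a routine but sizeable linear-algebra computation in the degree-$14$ basis. Conceptually, the crux is pairing the lexicographic-maximum observation with the incomparability of $[8,4,2,0]$ and $[7,7,0,0]$, which is exactly the mechanism behind Lemma~\ref{nosqueeze}; once these are in hand, the failure of the Haldane property for $p$ is immediate, and the two preceding propositions supply its minimality.
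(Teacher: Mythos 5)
Your proposal is correct and follows essentially the same route as the paper: verify translation invariance computationally, observe that $B(p)$ lacks a maximum, and invoke Proposition~\ref{nonHaldane} (together with the trivariate result) for minimality in arity and degree. The only difference is that you make explicit the incomparability of $[8,4,2,0]$ and $[7,7,0,0]$ via partial sums where the paper simply points to the Hasse diagram; this is a welcome sharpening but not a different argument.
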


\begin{proof}
  \begin{figure}[hbt]
    \centering
    \includegraphics[width=\textwidth]{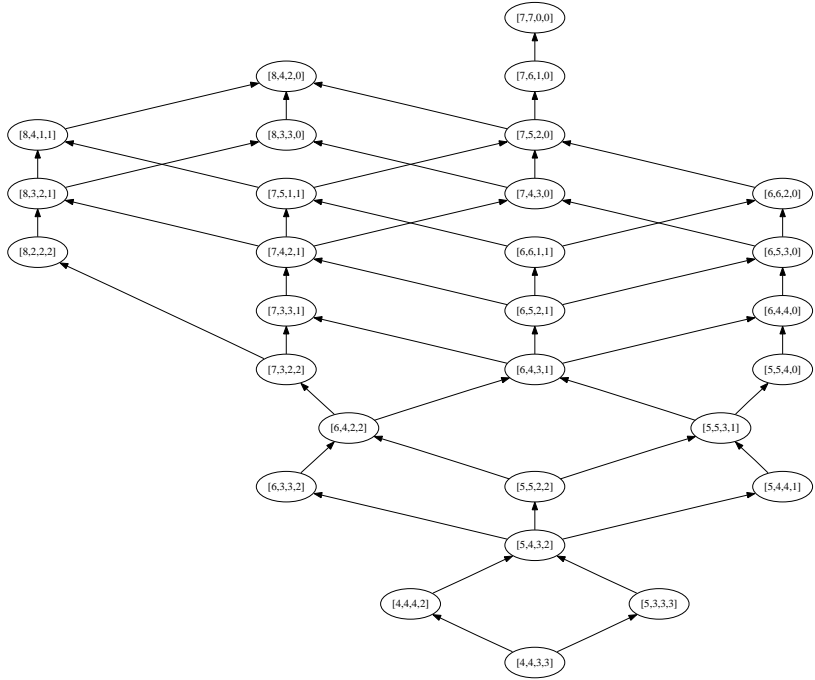}
    \caption{Hasse diagram of $B(p)$.  Arrows point from smaller to bigger
      elements.}\label{poset}
  \end{figure}
  One checks by computer that $p$ is translation invariant.  Since it is
  symmetric and homogeneous but lacks a maximum (Figure~\ref{poset}), it breaks
  Haldane's conjecture.  By Proposition~\ref{nonHaldane} it is minimal with
  respect to arity and homogeneous degree.
\end{proof}

\begin{rem*}
  The counterexample of Proposition~\ref{counterexample} is not minimal with
  respect to homogeneous degree.  For instance, the smallest pentavariate
  counterexamples have homogeneous degree $10$.
\end{rem*}

\begin{rem*}
  We might weaken Haldane's conjecture by hoping $R_n^d$ has a basis of Haldane
  polynomials.  Computer evidence suggests $|L_n^d| = \dim R_n^d$.  Writing
  $L_n^d = \{l_1, \ldots, l_k\}$, we could then obtain a special basis $\{p_1,
  \ldots, p_k\}$ of $R_n^d$ satisfying $B(p_i) \cap L_n^d = \{l_i\}$.  Perhaps
  it would be a Haldane basis or could be used to construct one.
\end{rem*}

\section{Proof of Theorem~\ref{ker_avg}}
\label{bigproof}

We factor $\rho$ into two maps which are easier to study:
\[
\xymatrix{X \ar[r]_\tau \ar@(ur,ul)[rr]^\rho & Y \ar[r]_\pi & \trans{Z}}
\]
Let $Y = \F[y_1, \ldots, y_n]$, and define $\tau, \pi$ by
\begin{align*}
  \tau(x_i) &= \frac{1}{n}\sum_{j=0}^{n-1} (n-1-j)y_{i+j} & \pi(y_i) &=
  z_i-z_{i+1}
\end{align*}
where index addition is modulo $n$.  Then
\[ \pi \tau (x_i) = \frac{1}{n}\biggl((n-1)z_i-\sum_{j \neq i}z_j\biggr) =
z_i-z_\avg
\]
showing $\pi \tau = \rho$.  It suffices to show $\tau^{-1}(\ker \pi) =
(x_\avg)$.  Since $ \tau(x_\avg) \propto y_\avg$, this follows from Lemmas
\ref{lem-isom} and \ref{lem-chain_isom} below.

\begin{lem} \label{lem-isom} $\tau$ is an isomorphism.
\end{lem}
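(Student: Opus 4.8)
The plan is to reduce the statement to a single determinant computation. Since $\tau$ is a graded algebra homomorphism between polynomial rings that carries each generator $x_i$ to a homogeneous linear form in the $y_j$, and since its domain and codomain each have exactly $n$ generators, $\tau$ is an isomorphism if and only if its restriction to the degree-one parts is bijective; that is, if and only if the $n \times n$ matrix $M$ defined by $\tau(x_i) = \sum_k M_{ik}\,y_k$ is invertible. The entries of $M$ lie in $\mathbb{Q} \subseteq \F$, so $\det M$ is a fixed rational number independent of the ambient field; it therefore suffices to verify $\det M \neq 0$, and for that we are free to work over \C.

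First I would read off $M$ explicitly. By definition the coefficient of $y_{i+j}$ in $\tau(x_i)$ is $\tfrac{1}{n}(n-1-j)$, and since indices are taken modulo $n$ this coefficient depends only on the cyclic offset $k - i \pmod n$. Hence $M$ is a circulant matrix with first row $c_j = \tfrac{1}{n}(n-1-j)$, $j = 0, \ldots, n-1$. This is the key structural observation: it is exactly what brings the classical circulant diagonalization into play and makes the rest routine.

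Next I would compute the spectrum. Writing $\omega = e^{2\pi i / n}$, the circulant eigenvalues are $\lambda_k = \sum_{j=0}^{n-1} c_j\,\omega^{jk}$ for $k = 0, \ldots, n-1$. The case $k = 0$ gives $\lambda_0 = \tfrac{1}{n}\sum_{j=0}^{n-1}(n-1-j) = \tfrac{n-1}{2}$. For $k \neq 0$ I would split the sum as $(n-1)\sum_j \omega^{jk} - \sum_j j\,\omega^{jk}$; the first piece vanishes because $\omega^k \neq 1$ is an $n$-th root of unity, and the arithmetic–geometric sum evaluates (via the telescoping identity $(1-\omega^k)\sum_j j\,\omega^{jk} = \sum_j \omega^{jk} - n\,\omega^{nk}$, equivalently by differentiating $\sum_j z^j$) to $\sum_{j=0}^{n-1} j\,\omega^{jk} = n/(\omega^k - 1)$. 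This yields $\lambda_k = 1/(1 - \omega^k)$, which is nonzero since $\omega^k \neq 1$. Thus every eigenvalue is nonzero, so $\det M = \prod_k \lambda_k \neq 0$; concretely, using the standard identity $\prod_{k=1}^{n-1}(1 - \omega^k) = n$, one obtains $\det M = \tfrac{n-1}{2n}$, which is nonzero for $n \geq 2$ (the univariate case $n=1$ being degenerate and handled trivially).

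I expect the only genuine work to be this spectral computation, and within it the evaluation of the sum $\sum_j j\,\omega^{jk}$. Recognizing $M$ as a circulant is precisely what reduces the potential obstacle to a textbook root-of-unity manipulation, and once the eigenvalues are in hand the invertibility of $M$, and hence the claim that $\tau$ is an isomorphism, follows at once.
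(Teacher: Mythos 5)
Your proposal is correct, and it shares the paper's overall skeleton: reduce the claim to the invertibility of the $n\times n$ matrix $M$ of the degree-one part of $\tau$ (the paper does this via its Observation on the universal property of polynomial rings), and observe that $M$ is a circulant built from the coefficients $\tfrac{1}{n}(n-1-j)$. Where you diverge is in how nonsingularity of $M$ is established. The paper simply cites a theorem of Geller, Kra, Popescu, and Simanca stating that a circulant whose first row is a strictly decreasing sequence of nonnegative reals is nonsingular. You instead carry out the classical diagonalization yourself: your evaluation of $\sum_j j\,\omega^{jk}=n/(\omega^k-1)$ is right, the eigenvalues $\lambda_0=\tfrac{n-1}{2}$ and $\lambda_k=1/(1-\omega^k)$ for $k\neq 0$ are correct, and the resulting $\det M=\tfrac{n-1}{2n}$ checks out (via $\prod_{k=1}^{n-1}(1-\omega^k)=n$); your remark that the entries are rational, so the determinant may be computed over $\C$, properly handles the passage from an abstract characteristic-zero field. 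Your route is self-contained and yields an explicit determinant, at the cost of a short computation; the paper's is shorter but leans on an external reference. One small caveat: for $n=1$ the matrix is $[0]$ and the lemma as stated actually fails (though the theorem it serves remains trivially true), so "handled trivially" is a bit glib --- but the paper's own proof has the same implicit assumption $n\geq 2$, so this is not a defect relative to the paper.
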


\begin{proof}
  Let $\hat{\tau}\colon \F x_1 + \cdots + \F x_n \to \F y_1 + \cdots + \F y_n$
  be the linear map which extends to $\tau$. Then the matrix $M$ of
  $\hat{\tau}$ with respect to the evident bases is the $n \times n$ circulant
  matrix with first column vector
  \[v = \frac{1}{n}(n-1,n-2, \ldots, 0). \] Then $M^\top$ is the circulant
  matrix with first row $v$.  Since $\mathrm{char}{\F}=0$, the entries of $v$
  form a strictly decreasing sequence of nonnegative reals.  Therefore $M^\top$
  is nonsingular by Theorem~3 of Geller, Kra, Popescu, and Simanca
  \cite{circulant}.  Hence $\hat{\tau}$ is an isomorphism.  Therefore $\tau$ is
  an isomorphism by Observation~\ref{obs-matrix}.
\end{proof}

\begin{obs} \label{obs-matrix} Suppose $f: \F[a_1, \ldots, a_n] \to \F[b_1,
  \ldots, b_n]$ is an algebra homomorphism between polynomial rings which
  restricts to a linear map
  \[
  \hat{f}:\F a_1+\cdots +\F a_n \to \F b_1+\cdots +\F b_n
  \]
  If $\hat{f}$ is an isomorphism, then so is $f$.
\end{obs}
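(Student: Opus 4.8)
The plan is to construct an explicit two-sided inverse for $f$ using the universal property of polynomial rings. Since $\hat{f}$ is a linear isomorphism, it has an inverse $g = \hat{f}^{-1} \colon \F b_1 + \cdots + \F b_n \to \F a_1 + \cdots + \F a_n$. Because $\F[b_1, \ldots, b_n]$ is free as a commutative $\F$-algebra on the generators $b_1, \ldots, b_n$, the assignment $b_j \mapsto g(b_j)$ extends uniquely to an algebra homomorphism $G \colon \F[b_1, \ldots, b_n] \to \F[a_1, \ldots, a_n]$. My goal is then to show that $G$ is a two-sided inverse of $f$.

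The key observation is that $G$ restricts to $g$ on the linear span $\F b_1 + \cdots + \F b_n$. Indeed, any algebra homomorphism is automatically $\F$-linear, so for a linear combination $\sum_j c_j b_j$ we have $G(\sum_j c_j b_j) = \sum_j c_j G(b_j) = \sum_j c_j g(b_j) = g(\sum_j c_j b_j)$. By hypothesis $f$ restricts to $\hat{f}$ on $\F a_1 + \cdots + \F a_n$, so both maps are controlled by their prescribed linear behavior on the spans of the generators.

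I would then verify $G \circ f = \mathrm{id}$ by checking it on the generators $a_i$. For each $i$, the element $f(a_i) = \hat{f}(a_i)$ lies in the span of the $b_j$, so $(G \circ f)(a_i) = G(\hat{f}(a_i)) = g(\hat{f}(a_i)) = (\hat{f}^{-1} \circ \hat{f})(a_i) = a_i$. Since an algebra endomorphism of a polynomial ring is determined by its values on the generators, $G \circ f$ must be the identity. The symmetric computation, using $g(b_j)$ in place of $\hat{f}(a_i)$, gives $(f \circ G)(b_j) = b_j$ for every $j$, so $f \circ G$ is the identity as well. Hence $f$ is an isomorphism with inverse $G$.

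There is no serious obstacle here; the only point requiring care is the interchange between the algebra-level maps and their restrictions to the linear spans. This is handled entirely by the universal property of the polynomial ring, which freely determines a homomorphism out of it by its values on generators, together with the elementary fact that any algebra homomorphism is $\F$-linear, so that its restriction to the span of the generators is well-defined and coincides with the given linear map.
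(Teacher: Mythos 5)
Your argument is correct and is exactly the argument the paper compresses into the single phrase ``the universal property of polynomial rings'': extend $\hat{f}^{-1}$ freely to an algebra homomorphism $G$ and check on generators that $G\circ f$ and $f\circ G$ are the identity. The two proofs coincide in substance; yours simply makes the routine verifications explicit.
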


\begin{proof}
  The universal property of polynomial rings.
\end{proof}

\begin{lem}\label{lem-chain_isom}
  $\ker \pi = (y_\avg)$.
\end{lem}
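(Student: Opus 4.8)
The plan is to realize $\pi$, after an invertible linear change of variables on the source, as a coordinate projection, so that its kernel becomes visibly principal.

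First I would dispose of the easy inclusion $(y_\avg) \subseteq \ker\pi$. The image $\pi(y_1 + \cdots + y_n) = (z_1 - z_2) + (z_2 - z_3) + \cdots + (z_n - z_1)$ telescopes to $0$ (recall indices are taken modulo $n$, so $\pi(y_n) = z_n - z_1$). Since $y_\avg = \frac{1}{n}(y_1 + \cdots + y_n)$, this gives $y_\avg \in \ker\pi$, whence $(y_\avg) \subseteq \ker\pi$.

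The substance of the proof is the reverse inclusion, and it rests on a single observation: the $n-1$ consecutive differences $\pi(y_i) = z_i - z_{i+1}$, for $1 \leq i \leq n-1$, are algebraically independent over $\F$. They are linearly independent linear forms --- from $\sum_{i=1}^{n-1} c_i(z_i - z_{i+1}) = 0$ the coefficient of $z_1$ forces $c_1 = 0$, then that of $z_2$ forces $c_2 = 0$, and so on --- and linearly independent linear forms are always algebraically independent. This is the step I expect to carry the real weight; the rest is bookkeeping.

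Granting it, I would change coordinates on $Y = \F[y_1, \ldots, y_n]$ by replacing $y_n$ with $y := y_1 + \cdots + y_n$ while retaining $y_1, \ldots, y_{n-1}$; this is an invertible linear substitution, so $Y = \F[y_1, \ldots, y_{n-1}, y]$. In these coordinates $\pi(y_i) = z_i - z_{i+1}$ for $i < n$, while $\pi(y) = 0$ by the telescoping computation above. Writing an arbitrary $f \in Y$ as $f = \sum_{k \geq 0} g_k(y_1, \ldots, y_{n-1})\, y^k$, every term with $k \geq 1$ maps to $0$, so $\pi(f) = g_0(z_1 - z_2, \ldots, z_{n-1} - z_n)$. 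By the algebraic independence just established, $\pi(f) = 0$ forces $g_0 = 0$, i.e.\ $y \mid f$. Therefore $\ker\pi \subseteq (y) = (y_\avg)$, and combined with the first paragraph this yields $\ker\pi = (y_\avg)$.
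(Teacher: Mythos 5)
Your proof is correct. Both inclusions are sound: the telescoping computation gives $(y_\avg)\subseteq\ker\pi$, and the reverse inclusion correctly reduces to the algebraic independence of the $n-1$ linearly independent forms $z_1-z_2,\ldots,z_{n-1}-z_n$, after the invertible substitution that makes $y=y_1+\cdots+y_n$ a coordinate. The paper's proof has the same mathematical core --- a linear change of variables turning (a unit multiple of) $y_\avg$ into a single variable, followed by an injectivity argument on the complementary variables --- but packages it as a chain isomorphism between two short exact sequences: it maps the evident sequence $0\to(y_1)\to Y\to\F[y_2,\ldots,y_n]\to 0$ onto the desired one, and checks the three vertical maps are isomorphisms, with the key step being that $y_i\mapsto z_i-z_1$ defines an isomorphism $\F[y_2,\ldots,y_n]\to T$. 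That version buys something extra: it simultaneously proves $\pi$ is surjective onto $T$ (via translation invariance, $p(z_1,\ldots,z_n)=p(0,z_2-z_1,\ldots,z_n-z_1)$), which is implicitly used elsewhere since $\rho=\pi\tau$ is asserted to be surjective. Your version is leaner and proves exactly the stated kernel identity without the homological scaffolding; its injectivity step (algebraic independence of linearly independent linear forms) is the same fact the paper establishes by substituting $z_i\mapsto y_i$ and killing $y_1$. If you wanted your argument to slot into the paper's Section~4 wholesale, you would only need to append the one-line surjectivity observation.
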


\begin{proof}
  Let $\alpha = (\alpha_1,\alpha_2,\alpha_3)$ be the chain map
  \[\xymatrix{
    0 \ar[r] & (y_\avg) \ar[r] & Y \ar[r]^\pi & \trans{Z} \ar[r] & 0 \\
    0 \ar[r] & (y_1) \ar[u]^{\alpha_1} \ar[r] & Y \ar[u]^{\alpha_2}
    \ar[r]_{\pi'} & Y' \ar[u]_{\alpha_3} \ar[r] & 0 }\]
  where $Y' = \F[y_2, \ldots, y_n]$, and $\pi', \alpha_3, \alpha_2, \alpha_1$
  are the algebra homomorphisms such that $\pi'$ kills $y_1$ and fixes the
  other variables, $\alpha_3$ and $\pi$ agree, $\alpha_2$ sends $y_1$ to
  $y_\avg$ and fixes the other variables, $\alpha_1$ and $\alpha_2$ agree, and
  the unlabelled nonzero maps are inclusions of ideals.  We want the top
  sequence to be exact.  Since the bottom sequence is exact, it suffices to
  check $\alpha$ is a chain isomorphism.

  Since $\alpha$ is a chain map, it suffices to show each component is an
  isomorphism.  By Observation~\ref{obs-matrix}, $\alpha_2$ is an isomorphism.
  Then so is $\alpha_1$.  For $\alpha_3$, let $\beta\colon Y' \to Y'$ be the
  algebra homomorphism given by $\beta(y_i)= y_i+\cdots + y_n$ for $2 \leq i
  \leq n$.  Again by Observation~\ref{obs-matrix}, $\beta$ is an isomorphism.
  Then it suffices to show $\gamma = \alpha_3 \beta$ is an isomorphism.

  Note $\gamma\colon Y' \to \trans{Z}$ and $\gamma(y_i) = z_i-z_1$ for $2 \leq
  i \leq n$.  Since
  \[ p(z_1, \ldots, z_n)=p(0,z_2-z_1, \ldots, z_n-z_1)=\gamma(p(0,y_2, \ldots,
  y_n))
  \]
  for any $p(z_1, \ldots, z_n) \in \trans{Z}$, the homomorphism $\gamma$ is
  surjective.  If
  \begin{align*}
    0 = \gamma(q(y_2, \ldots, y_n))= q(z_2-z_1, \ldots, z_n-z_1)
  \end{align*}
  then $0 = \pi'(q(y_2-y_1, \ldots, y_n-y_1)) = q(y_2, \ldots, y_n)$, showing
  $\gamma$ is injective.  Thus $\gamma$ is an isomorphism.
\end{proof}

\section*{Acknowledgements}
It is a pleasure to thank Tobias Hagge, Thomas Howard, Joules Nahas, and
Zhenghan Wang for useful discussions, and Zhenghan Wang for suggesting the two
problems of this paper.

\bibliography{Haldaneandsympolysbib}

\begin{thebibliography}{1}

\bibitem{HaskellforMathsv1}
David Amos.
\newblock {\em {H}askell for {M}aths ({V}ersion 1)}, 2008.
\newblock \url{http://www.polyomino.f2s.com/david/haskell/main.html}.

\bibitem{circulant}
Daryl Geller, Irwin Kra, Sorin Popescu, and Santiago Simanca.
\newblock On circulant matrices.
\newblock \url{www.math.sunysb.edu/~sorin/eprints/circulant.pdf}.

\bibitem{haldane}
F.~Haldane.
\newblock {\em Bull. Am. Phys. Soc.}, 51:633, 2006.
\newblock Conference talk.

\bibitem{simon}
Steven~H. Simon, E.~H. Rezayi, and Nigel~R. Cooper.
\newblock Pseudopotentials for multi-particle interactions in the quantum
  {H}all regime.
\newblock {\em Phys. Rev. B}, 75(195306), 2007, cond-mat/0701260.

\bibitem{wenwang}
Xiao-Gang Wen and Zhenghan Wang.
\newblock Classification of symmetric polynomials of infinite variables:
  Construction of abelian and non-abelian quantum {H}all states.
\newblock {\em Phys. Rev. B}, 77(23):235108, Jun 2008, arXiv:0801.3291v2.

\end{thebibliography}
\bibliographystyle{hplain}
\nocite{*}
\end{document}